\documentclass[11pt]{article}
\usepackage[T1]{fontenc}
\usepackage[utf8]{inputenc}
\usepackage{amsmath,amssymb,amsthm,microtype}
\usepackage[hidelinks]{hyperref}
\hypersetup{
  pdftitle={Hamel-based TVG Stronger than the Usual Topology on R},
  pdfauthor={Irina Yaroshevskaya},
  pdfsubject={General topology and topological vector groups},
  pdfkeywords={topological vector group, Hamel basis, additive function, initial topology}
}
\newtheorem{theorem}{Theorem}
\newtheorem{lemma}[theorem]{Lemma}
\newtheorem{proposition}[theorem]{Proposition}
\newtheorem{corollary}[theorem]{Corollary}
\title{Hamel-based TVG Stronger than the Usual Topology on $\mathbb R$}
\author{Irina Yaroshevskaya}
\date{}
\begin{document}
\maketitle
\begin{abstract}
For each nonzero rational-linear functional $f:\mathbb R\to\mathbb Q$, we construct a Hausdorff nondiscrete topological vector group topology $\tau_f$ strictly finer than the usual topology on $\mathbb R$. We prove that $\tau_f=\tau_g$ if and only if $g(x)=f(bx)$ for some nonzero real $b$, and that distinct members of this family are incomparable. There are exactly $2^{\mathfrak{c}}$ distinct topologies on the fixed real line, where $\mathfrak{c}=|\mathbb R|$. Every member is totally disconnected and fails to be first countable. The supremum of the family is still nondiscrete and strictly finer than each individual member. We also prove the existence of maximal nondiscrete TVG extensions.
\end{abstract}
\noindent\textbf{2020 Mathematics Subject Classification.} Primary 54H11; Secondary 22A05, 54A10.
\par\noindent\textbf{Key words and phrases.} topological vector group, Hamel basis, additive function, initial topology, incomparable topologies.

\section{Introduction and definitions}
We construct nondiscrete Hausdorff topological vector group topologies strictly finer than the usual topology on the real line. A rational coordinate with respect to a Hamel basis supplies an additive function; imposing continuity of all its real rescalings makes every fixed dilation continuous. We then classify equality and inclusion within this family and count the resulting topologies.

In \cite{Yaroshevskaya1975} we introduced a Hausdorff quasi-vector topology $t$ on $\mathbb R$ distinct from the usual topology. In \cite{Yaroshevskaya1981}, received in 1979 and published in 1981, we proved that $t$ is minimal among Hausdorff TVG topologies: no strictly coarser Hausdorff TVG topology exists. More recently, \cite{Yaroshevskaya2026} constructs topologies $\sigma$ satisfying $t\subsetneq\sigma\subsetneq\tau_u$. The present article studies TVG topologies above $\tau_u$, and proves an equality criterion and a multiplicity theorem for the Hamel coefficient construction.

Write $\mathbb Z$ for the integers, $\mathbb Q=\{\frac{p}{q}:p,q\in\mathbb Z,\ q\ne0\}$ for the rational numbers, and $\mathbb R$ for the real numbers. Denote the usual and discrete topologies on $\mathbb R$ by $\tau_u$ and $\tau_d$, respectively. A topology is finer, or stronger, when it contains the other topology as a collection of open sets.

A topological vector group (TVG) topology on $\mathbb R$ is a group topology for addition such that, for each fixed $a\in\mathbb R$, the map $x\mapsto ax$ is continuous. Joint continuity of the map $(a,x)\mapsto ax$, with the scalar variable carrying its usual topology, is not assumed. This is the convention used in \cite{Dergachev1978,DominguezTarieladze2008}.

A Hamel basis of $\mathbb R$ over $\mathbb Q$ is a subset $H\subset\mathbb R$ such that every real number has a unique finite rational expansion

\[
x=\sum_{h\in F}q_hh,\qquad F\subset H\text{ finite},\quad q_h\in\mathbb Q.
\]
Equivalently, $H$ spans $\mathbb R$ over $\mathbb Q$ and is linearly independent over $\mathbb Q$. Choose $h_0\in H$ and let $f(x)$ be the coefficient of $h_0$ in this expansion, with coefficient zero when $h_0$ is absent. Then

\[
f(x+y)=f(x)+f(y),\qquad f(qx)=qf(x)\quad(q\in\mathbb Q),
\]
\[
f(h_0)=1,\qquad f(h)=0\quad(h\in H,\ h\ne h_0).
\]
The existence of Hamel bases and their extension from linearly independent sets uses the axiom of choice. Hamel's original work \cite{Hamel1905} establishes the classical connection with discontinuous solutions of the additive functional equation.

Put $D=\operatorname{Hom}_{\mathbb Q}(\mathbb R,\mathbb Q)$, the set of all rational-linear maps from $\mathbb R$ to $\mathbb Q$. Every nonzero $f\in D$ is a Hamel coefficient functional: choose $h_0$ with $f(h_0)=1$, choose a basis of $\ker f$, and adjoin $h_0$. We may therefore work with arbitrary nonzero members of $D$.

\section{The construction}\label{sec:construction}
For $f\in D\setminus\{0\}$ and $a\in\mathbb R$, define $f_a(x)=f(ax)$. Let $\tau_f$ be the coarsest topology making the identity map into $\mathbb R_u$ and every map $f_a$ into $\mathbb R_u$ continuous. Here $\mathbb R_u$ means the usual real line, and rational values are viewed as real values.

Basic neighborhoods of zero are

\[
U_f(\delta;A,\varepsilon)=\{x:|x|<\delta,\ |f(ax)|<\varepsilon\text{ for every }a\in A\},
\]
where $\delta,\varepsilon>0$ and $A\subset\mathbb R$ is finite, possibly empty. Using one common $\varepsilon$ gives a base because any finite list of positive bounds can be replaced by its minimum. Neighborhoods at $y$ are translates $y+U_f(\delta;A,\varepsilon)$.

\begin{theorem}[Construction]\label{thm:construction}
For every nonzero $f\in D$, the topology $\tau_f$ is a Hausdorff nondiscrete TVG topology, and
\[
\tau_u\subsetneq\tau_f\subsetneq\tau_d.
\]
\end{theorem}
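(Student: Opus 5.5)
The plan is to treat $\tau_f$ as an initial topology generated by additive maps and verify each claim directly from the neighborhood base $U_f(\delta;A,\varepsilon)$.

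\emph{Group topology.} Every generating map (the identity and each $f_a$) is additive from $(\mathbb R,+)$ into $\mathbb R_u$. The initial topology of a family of homomorphisms into topological groups is therefore a group topology, with neighborhoods of $y$ equal to $y+U$. Concretely, I will check two inclusions:
\[
U_f(\delta/2;A,\varepsilon/2)+U_f(\delta/2;A,\varepsilon/2)\subset U_f(\delta;A,\varepsilon), \qquad -U=U.
\]
Both follow from the triangle inequality together with $f(a(x+y))=f(ax)+f(ay)$.

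\emph{Continuity of dilations.} Fix $c\ne0$; the case $c=0$ is trivial. By additivity it suffices to check continuity of $x\mapsto cx$ at $0$. Given $U_f(\delta;A,\varepsilon)$, I claim
\[
U_f\bigl(\delta/|c|;\,cA,\,\varepsilon\bigr)\subset c^{-1}U_f(\delta;A,\varepsilon).
\]
This holds because $|cx|<\delta$ and $f(a(cx))=f((ac)x)=f_{ac}(x)$. This is exactly where closing the family under all real rescalings is used.

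\emph{Hausdorff and $\tau_u\subseteq\tau_f$.} Both are immediate, since the identity into $\mathbb R_u$ is among the generating maps.

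\emph{Nondiscreteness,} which I expect to be the main obstacle. I must show every $U=U_f(\delta;A,\varepsilon)$ contains a nonzero point. Let $A=\{a_1,\dots,a_n\}$. The set
\[
K=\{x: f(a_ix)=0 \text{ for all } i\}
\]
is a $\mathbb Q$-subspace of $\mathbb R$. It is the kernel of the $\mathbb Q$-linear map $x\mapsto(f(a_1x),\dots,f(a_nx))$ into $\mathbb Q^n$, so it has codimension at most $n$ in $\mathbb R$. Since $\mathbb R$ is infinite-dimensional over $\mathbb Q$, $K$ is infinite-dimensional, and in particular $K\ne\{0\}$. Pick $0\ne k\in K$. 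Then $k/m\in K$ for every positive integer $m$, and $|k/m|<\delta$ for large $m$. Hence $k/m\in U\setminus\{0\}$, so $\tau_f$ is not discrete. This also gives $\tau_f\subsetneq\tau_d$.

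\emph{Strictness of $\tau_u\subsetneq\tau_f$.} I will show $V=U_f(1;\{1\},1/2)=\{|x|<1,\ f(x)=0\}$ is not a $\tau_u$-neighborhood of $0$. Values of $f$ lie in $\mathbb Q$, so $|f(x)|<1/2$ does not force $f(x)=0$. Instead I use a cleaner set: $W=\{x:|f(x)|<1/2\}$ is $\tau_f$-open at $0$, being a neighborhood of $0$ that is a subgroup-like set, or at least a neighborhood of $0$ in $\tau_f$. Suppose $W$ contained an interval $(-\eta,\eta)$. Choose $h_0$ with $f(h_0)=1$ and take a rational $q$ with $|qh_0|<\eta/2$, and set $y=qh_0$. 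Then pick any $z$ with $f(z)$ large, say $f(z)=N>1$, and choose rational $r$ with $|rz|<\eta/2$ and $f(rz)=rN$ arbitrary in a dense set. For instance, choose rational $r$ with $1\le rN\le 2$ and $|rz|<\eta$; this is possible by replacing $z$ by $z-p h_0$ for suitable rational $p$, which keeps $f$-values controllable and makes $|z|$ tiny. More simply: $f$ is unbounded on every interval, because $f$ is a nonzero additive function that is discontinuous, being $\mathbb Q$-valued and nonconstant, hence not of the form $x\mapsto cx$. Thus some $x\in(-\eta,\eta)$ has $|f(x)|\ge 1/2$, a contradiction. Therefore $W\in\tau_f\setminus\tau_u$.
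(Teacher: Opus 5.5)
Your proof is correct. The group-topology, dilation ($U_f(\delta/|c|;cA,\varepsilon)$) and Hausdorff steps coincide with the paper's; the last two steps take different routes.

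\emph{Nondiscreteness.} The paper uses no dimension theory here. Since $f_a(\tfrac1n)=f(a)/n\to0$ for each of the finitely many $a\in A$, the sequence $\tfrac1n$ converges to $0$ in $\tau_f$. So this step needs only $\mathbb Q$-homogeneity and is not the main obstacle. Your kernel argument needs $\dim_{\mathbb Q}\mathbb R=\infty$ and codimension counting, but it proves more. Every basic neighborhood contains nonzero points at which all the finitely many $f_{a_i}$ vanish exactly. This is the device the paper uses later in Theorem~\ref{thm:rational-dual} to force $c_0=0$.

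\emph{Strictness $\tau_u\ne\tau_f$.} The paper argues directly that $f$ is $\tau_f$-continuous but not $\tau_u$-continuous. A usual-continuous additive map has the form $x\mapsto cx$, and one with rational values is $0$. This already gives $\tau_f\not\subseteq\tau_u$. Your route through $W=f^{-1}\bigl((-\tfrac12,\tfrac12)\bigr)$ reaches the same conclusion. However, it must also import the classical theorem that a discontinuous additive function is unbounded on every interval, which the direct continuity argument does not need.

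When you write this up, clean the final paragraph:
\begin{itemize}
\item The identification $V=\{|x|<1,\ f(x)=0\}$ is false, as you notice yourself.
\item $W$ is not ``subgroup-like.'' It is $\tau_f$-open simply as the preimage under $f_1$ of an open interval.
\item The abandoned construction with $z-ph_0$ does not control $|z|$ and $f(z)$ at the same time, so it should be removed.
\end{itemize}
If you prefer an elementary argument to citing the unboundedness theorem, your own kernel step with $A=\{1\}$ supplies $0\ne k\in\ker f$. Choosing a rational $q$ with $|h_0/k-q|<\eta/|k|$ then gives $y=h_0-qk\in(-\eta,\eta)$ with $f(y)=1$.
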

\begin{proof}
All the defining maps are additive, so their initial topology is a group topology. More explicitly, halving both positive bounds in a basic neighborhood $U$ gives a neighborhood $V$ with $V+V\subset U$, and $U=-U$. If $y\in U$, the finitely many strict inequalities have positive margins; choose smaller positive bounds so that the corresponding translated neighborhood at $y$ is contained in $U$. This also verifies that the displayed sets are open.

For a fixed $b\ne0$, multiplication by $b$ has inverse image

\[
\{x:|bx|<\delta,\ |f(abx)|<\varepsilon\ (a\in A)\}=U_f(\frac{\delta}{|b|};bA,\varepsilon).
\]
Thus every fixed nonzero scalar acts by a homeomorphism. The zero scalar gives a constant map. Taking $A$ empty shows that usual intervals are open, so $\tau_u\subseteq\tau_f$ and $\tau_f$ is Hausdorff.

A usual-continuous additive real function has the form $x\mapsto cx$: its values on the rationals determine its values everywhere by density. If such a function has its entire image in $\mathbb Q$, it is zero, since $c\mathbb R=\mathbb R$ for $c\ne0$. Consequently our nonzero rational-valued $f$ is not usual-continuous. It is $\tau_f$-continuous by construction, so $\tau_u\subsetneq\tau_f$.

Finally, fix a basic neighborhood. Rational linearity gives

\[
f(\frac{a}{n})=\frac{f(a)}{n}\longrightarrow0\qquad(n\longrightarrow\infty).
\]
Together with $\frac{1}{n}\to0$ usually, these finitely many limits show that $\frac{1}{n}\to0$ in $\tau_f$. Since every term is nonzero, $\tau_f$ is nondiscrete.

\end{proof}
\begin{proposition}[Separation and scalar continuity]\label{prop:separation}
Each $(\mathbb R,\tau_f)$ is totally disconnected. For every $v\ne0$, the orbit map $a\mapsto av$ from $\mathbb R_u$ to $(\mathbb R,\tau_f)$ is discontinuous at zero. In particular, scalar multiplication with the usual scalar topology is not jointly continuous.
\end{proposition}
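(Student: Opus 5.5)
Total disconnectedness will follow from the fact that $f$ is $\tau_f$-continuous with values in $\mathbb Q$. Given $x\ne y$, we split into two cases.

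\emph{Case $f(x)\ne f(y)$.} Choose an irrational $r$ strictly between $f(x)$ and $f(y)$. The set $\{z: f(z)<r\}$ is the preimage of an open set under the continuous map $f=f_1$, so it is $\tau_f$-open. Because $f$ never takes the value $r$, this set equals $\{z:f(z)\le r\}$, which is the preimage of a closed set and hence closed. It is therefore a clopen set containing exactly one of $x,y$.

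\emph{Case $f(x)=f(y)$.} Then $x-y\ne0$ and $f$ is nonzero, so we pick $c$ with $f(c)\ne0$ and set $a=c/(x-y)$. This gives $f(a(x-y))=f(c)\ne0$, so $f_a(x)\ne f_a(y)$. The same irrational-cut argument applied to $f_a$, which is also $\tau_f$-continuous, separates $x$ and $y$ by a clopen set.

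So any two distinct points are separated by clopen sets. Hence every connected subset is a singleton, and the space is totally disconnected. This case analysis is the only slightly delicate point. The key observation is that the rescalings $f_a$ detect every nonzero difference, which holds because $a$ ranges over all reals.

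For the orbit map, fix $v\ne0$ and suppose $a\mapsto av$ were continuous at $0$ from $\mathbb R_u$. Then the composite $a\mapsto f(av)=f_v(a)$ would be usual-continuous at $0$, since $f_v$ is $\tau_f$-continuous. The map $a\mapsto f(av)$ is additive, and an additive function continuous at one point is continuous everywhere, because $g(a+h)-g(a)=g(h)$. By the argument in the proof of Theorem~\ref{thm:construction}, a usual-continuous additive function with values in $\mathbb Q$ must be zero. But $f(av)$ is not identically zero: choosing $a=c/v$ with $f(c)\ne0$ gives a nonzero value. This contradiction shows the orbit map is discontinuous at $0$.

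Finally, joint continuity of $(a,x)\mapsto ax$ on $\mathbb R_u\times(\mathbb R,\tau_f)$ would imply continuity of its restriction to $\mathbb R_u\times\{v\}$, which is the orbit map. That restriction was just shown to be discontinuous, so scalar multiplication is not jointly continuous.
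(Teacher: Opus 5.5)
Your proof is correct and essentially matches the paper's. For total disconnectedness, points are separated by the rescaling $f_a$ with $a=c/(x-y)$ into the totally disconnected set $\mathbb Q$; your irrational-cut clopen sets even give total separatedness, and your first case is already covered by the second. For the orbit map, you compose with $f$ to get a nonzero rational-valued additive map $a\mapsto f(av)$ that would be usual-continuous, whereas the paper composes with the dilation $x\mapsto x/v$ and contradicts $\tau_u\subsetneq\tau_f$, which rests on the same fact.

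One small slip: the composite $a\mapsto f(av)$ is continuous at $0$ because $f=f_1$, not $f_v$, is $\tau_f$-continuous.
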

\begin{proof}
If $x\ne y$, choose $t$ with $f(t)\ne0$ and set $a=\frac{t}{x-y}$. Then $f_a(x)\ne f_a(y)$. A connected subset has connected image under each $f_a$, but a connected subset of $\mathbb Q$ in its usual topology is a singleton: an irrational cut separates any two distinct rational numbers. Thus no connected subset contains two points.

For fixed $v\ne0$, continuity of the orbit map $a\mapsto av$ from $\mathbb R_u$ into $(\mathbb R,\tau_f)$, followed by the $\tau_f$-homeomorphism $x\mapsto \frac{x}{v}$, would make the identity from $\mathbb R_u$ to $(\mathbb R,\tau_f)$ continuous. This contradicts strictness in Theorem~\ref{thm:construction}. Failure of continuity at zero follows as well, since these orbit maps are additive.

\end{proof}
\section{Continuous additive functions}
We first give the finite-dimensional fact needed to compare the topologies. In this section the target of every continuous additive function carries the usual real topology.

\begin{lemma}[Finite-coordinate representation]\label{lem:finite-coordinates}
Let $S$ be a family of additive maps $\mathbb R\to\mathbb R$, and let $\tau_S$ be their initial topology. An additive map $g:\mathbb R\to\mathbb R$ is $\tau_S$-continuous if and only if it belongs to the real linear span of finitely many members of $S$.
\end{lemma}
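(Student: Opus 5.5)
The "if" direction is direct. Each $s\in S$ is $\tau_S$-continuous by definition, and $(\mathbb R,\tau_S)$ is a topological group because the initial topology of additive maps is a group topology, exactly as in the proof of Theorem~\ref{thm:construction}. A finite real linear combination $\sum c_i s_i$ is the composite of the continuous map $x\mapsto(s_1(x),\dots,s_n(x))$ into $\mathbb R_u^n$ with a continuous linear map $\mathbb R_u^n\to\mathbb R_u$, so it is continuous.

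For the converse, let $g$ be additive and $\tau_S$-continuous. Since $g$ is additive, continuity at $0$ suffices. The sets $\{x:|s(x)|<\varepsilon,\ s\in F\}$ with $F\subset S$ finite form a base at zero for $\tau_S$. So there are $s_1,\dots,s_n\in S$ and $\varepsilon>0$ such that $|s_i(x)|<\varepsilon$ for all $i$ implies $|g(x)|<1$. We first show that
\[
\bigcap_{i}\ker s_i\subseteq\ker g.
\]
Suppose $s_i(x)=0$ for all $i$ but $g(x)\neq0$. Then $kx$ still satisfies all the small-coordinate conditions for every $k\in\mathbb Z$, while $|g(kx)|=|k||g(x)|$ is unbounded, which is a contradiction.

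Hence $g$ factors through the additive map $\Phi=(s_1,\dots,s_n):\mathbb R\to\mathbb R^n$, as $g=\psi\circ\Phi$ with $\psi$ an additive map $\Phi(\mathbb R)\to\mathbb R$. We then need to show that $\psi$ extends to a real linear functional on $\mathbb R^n$. This is the main obstacle, because $\Phi(\mathbb R)$ is only a $\mathbb Q$-subspace of $\mathbb R^n$ and $\psi$ is only $\mathbb Q$-linear.

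Here the continuity estimate is used again. For $y=\Phi(x)$ with $\|y\|_\infty<\varepsilon$ we have $|\psi(y)|<1$. By rational homogeneity this gives
\[
|\psi(y)|\le \varepsilon^{-1}\|y\|_\infty
\]
for every $y\in\Phi(\mathbb R)$. To see this, scale $y$ by rationals $q$ with $q\|y\|_\infty<\varepsilon$ and $q$ close to $\varepsilon/\|y\|_\infty$; the case $y=0$ is handled by the kernel inclusion above. So $\psi$ is a Lipschitz $\mathbb Q$-linear map on $\Phi(\mathbb R)$. It extends uniquely and continuously to the closure $W=\overline{\Phi(\mathbb R)}$, a closed additive subgroup of $\mathbb R^n$ that is also closed under rational scaling and hence under real scaling. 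Therefore $W$ is a real linear subspace, and the continuous additive extension is $\mathbb R$-linear on $W$.

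Finally, extend this functional linearly from $W$ to all of $\mathbb R^n$, choosing it to be zero on a complement. The result is $\psi(y)=\sum c_iy_i$ for some reals $c_i$, so $g=\sum c_is_i$, which lies in the real linear span of finitely many members of $S$.
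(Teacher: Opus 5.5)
Your proof is correct and follows essentially the same route as the paper: the kernel inclusion via integer multiples, a Lipschitz bound from rational rescaling, continuous extension to the closure of $\Phi(\mathbb R)$ (which is a real subspace), and linear extension to $\mathbb R^n$. The only difference is cosmetic: you obtain the constant $\varepsilon^{-1}$ by a limiting choice of $q$, where the paper gets $2/\eta$ from a fixed rational window.
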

\begin{proof}
A finite real linear combination of continuous maps is continuous. Conversely, the zero function needs no representation, so assume that $g$ is nonzero. Continuity at zero gives $s_1,\ldots,s_m\in S$ and $\eta>0$ such that

\[
\max_{1\le j\le m}|s_j(x)|<\eta\quad\Longrightarrow\quad |g(x)|<1.
\]
If $p(x)=\max_{1\le j\le m}|s_j(x)|=0$, apply the implication to every positive integer multiple of $x$ to get $g(x)=0$. If $p(x)>0$, choose a positive rational $q$ with

\[
\frac{\eta}{2p(x)}<q<\frac{\eta}{p(x)}.
\]
Additive maps on $\mathbb R$ are rational-linear. Applying the implication to $qx$ yields

\[
|g(x)|\le\frac{2}{\eta}p(x).
\]
Define $T(x)=(s_1(x),\ldots,s_m(x))$ and define $\ell(T(x))=g(x)$. The zero-kernel observation makes $\ell$ well defined; the displayed estimate makes it a bounded rational-linear functional on $L=T(\mathbb R)\subset\mathbb R^m$.

The closure $E$ of $L$ is a real vector subspace: it is a closed additive subgroup closed under rational multiplication, and limits of rational scalar multiples give real scalar multiplication. The functional $\ell$ extends uniquely and continuously to $E$ by the bound. Its extension is additive and continuous, hence real-linear. Extend it linearly to the finite-dimensional space $\mathbb R^m$. For suitable real coefficients $c_j$ this gives $g=\sum_{j=1}^m c_js_j$.

\end{proof}
\begin{theorem}[Rational-valued continuous functions]\label{thm:rational-dual}
For $f\in D\setminus\{0\}$, the rational-valued additive maps continuous on $(\mathbb R,\tau_f)$ are exactly
\[
\{f_b:b\in\mathbb R\}.
\]
Moreover, the parameter $b$ is unique.

\end{theorem}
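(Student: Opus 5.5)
The plan is to apply Lemma~\ref{lem:finite-coordinates} to the family $S=\{\mathrm{id}\}\cup\{f_a:a\in\mathbb R\}$, whose initial topology is $\tau_f$ by definition. Each $f_b$ is additive, rational-valued and $\tau_f$-continuous by construction; note $f_0=0$, so the zero map is included. For the converse, let $g$ be additive, rational-valued and $\tau_f$-continuous. The lemma gives finitely many distinct $a_1,\dots,a_m$ and real coefficients with
\[
g(x)=c_0x+\sum_{j=1}^m c_jf(a_jx).
\]
The task is to show that this forces $g=f_b$ for a single $b$.

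First I will eliminate $c_0$. Restrict to $x\in\mathbb Q$, so that $f(a_jx)=xf(a_j)$ and $g(x)=x\bigl(c_0+\sum_j c_jf(a_j)\bigr)$. This gives no contradiction yet. Instead I use that $W=\bigcap_j\ker f_{a_j}$ has finite codimension over $\mathbb Q$, hence is uncountable. On $W$ we have $g(x)=c_0x\in\mathbb Q$. If $c_0\neq0$, then $W\subset c_0^{-1}\mathbb Q$, which is countable, a contradiction. So $c_0=0$.

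Next, $g=\sum_j c_jf_{a_j}$ with the $f_{a_j}$ rational-valued. I view $V=\operatorname{span}_{\mathbb Q}\{f_{a_j}\}\subset D$. Choose a $\mathbb Q$-basis $f_{a_{j_1}},\dots,f_{a_{j_k}}$ of it and rewrite $g=\sum_i d_if_{a_{j_i}}$ with real $d_i$. Because these functionals are $\mathbb Q$-linearly independent, the map $x\mapsto(f_{a_{j_i}}(x))_i\in\mathbb Q^k$ is onto $\mathbb Q^k$: its image is a $\mathbb Q$-subspace not contained in any hyperplane. Rationality of $g$ on preimages of the standard basis vectors gives $d_i\in\mathbb Q$. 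Thus $g$ is a rational combination $\sum_i q_if_{a_i}$.

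The key step, and the main obstacle, is showing that a rational combination $\sum q_if(a_ix)$ equals $f(bx)$. Here I would use $q_if(a_ix)=f(q_ia_ix)$ and additivity of $f$:
\[
\sum_i q_if(a_ix)=f\Bigl(\bigl(\textstyle\sum_i q_ia_i\bigr)x\Bigr),
\]
so $b=\sum_i q_ia_i$ works. This step is easy once $c_0=0$ and the coefficients are rational. The real difficulty is therefore the rationality of the coefficients, which the surjectivity argument above handles. In fact one can shortcut it: since the coefficients extracted there lie in $\mathbb Q$, the formula applies directly.

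Uniqueness of $b$: if $f_b=f_{b'}$, then $f((b-b')x)=0$ for all $x$. If $b\neq b'$, the substitution $x=t/(b-b')$ makes this vanish for every $t$, contradicting $f\neq0$.
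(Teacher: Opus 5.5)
Your proposal is correct and follows essentially the same route as the paper: apply Lemma~\ref{lem:finite-coordinates} to the identity and the maps $f_a$, eliminate $c_0$ using the finite-codimension subspace $\bigcap_j\ker f_{a_j}$, get rational coefficients from surjectivity onto $\mathbb Q^k$, merge the rational combination into a single $f_b$ by additivity, and derive uniqueness from surjectivity of multiplication by $b-b'$. The only differences are cosmetic: you compare cardinalities (uncountable versus countable) where the paper compares rational dimensions when eliminating $c_0$, and you take the $\mathbb Q$-basis from among the $f_{a_j}$ themselves rather than an arbitrary basis of their span.
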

\begin{proof}
Each $f_b$ is continuous by definition. Conversely, let $g\in D$ be continuous. Lemma~\ref{lem:finite-coordinates}, applied to the identity and the defining maps, gives

\[
g(x)=c_0x+\sum_{j=1}^m c_jf(a_jx),\qquad c_j\in\mathbb R.
\]
Let $K=\bigcap_{j=1}^m\ker f_{a_j}$. It has finite codimension over $\mathbb Q$ in $\mathbb R$, so its dimension over $\mathbb Q$ is infinite. For $x\in K$ we have $g(x)=c_0x\in\mathbb Q$. If $c_0\ne0$, this would imply $K\subseteq c_0^{-1}\mathbb Q$, a space of rational dimension one. Therefore $c_0=0$.

Choose a rational basis $h_1,\ldots,h_k$ for the rational span of the finitely many maps $f_{a_j}$ and rewrite $g=\sum_{i=1}^k d_ih_i$ with real $d_i$. If this span is zero, then $g=0=f_0$. Otherwise the rational-linear map

\[
x\longmapsto(h_1(x),\ldots,h_k(x))\in\mathbb Q^k
\]
is onto. Indeed, a proper image subspace would be annihilated by a nonzero rational linear functional on $\mathbb Q^k$, contradicting independence of the $h_i$. Choose $x_i$ mapping to the $i$th coordinate vector. Then $d_i=g(x_i)\in\mathbb Q$.

Consequently $g$ is a rational linear combination of the $f_{a_j}$. Rational linearity and additivity combine this into one rescaling:

\[
\sum_{j=1}^m q_jf(a_jx)=f\left(\left(\sum_{j=1}^m q_ja_j\right)x\right).
\]
Thus $g=f_b$ for some $b$. If $f_b=f_c$ and $b\ne c$, then $f((b-c)x)=0$ for every $x$. Multiplication by $b-c$ is onto $\mathbb R$, forcing $f=0$, a contradiction. Hence $b=c$.

\end{proof}
\section{Equality and inclusion}
\begin{theorem}[Exact comparison]\label{thm:comparison}
For nonzero $f,g\in D$, the following conditions are equivalent: (i) $\tau_g\subseteq\tau_f$; (ii) $g$ is $\tau_f$-continuous; (iii) $g=f_b$ for a nonzero real number $b$; (iv) $\tau_g=\tau_f$.
\end{theorem}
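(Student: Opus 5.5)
We will prove the cycle (i)$\Rightarrow$(ii)$\Rightarrow$(iii)$\Rightarrow$(iv)$\Rightarrow$(i).

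\emph{(i)$\Rightarrow$(ii).} The map $g=g_1$ is $\tau_g$-continuous into $\mathbb R_u$ by the definition of $\tau_g$. Hence it is continuous for any finer topology, in particular for $\tau_f$.

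\emph{(ii)$\Rightarrow$(iii).} Theorem~\ref{thm:rational-dual} shows that every rational-valued additive $\tau_f$-continuous map equals $f_b$ for a unique real $b$. Since $g$ is nonzero and $f_0=0$, we get $b\ne0$.

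\emph{(iii)$\Rightarrow$(iv).} Suppose $g=f_b$ with $b\ne0$. Then $g_a(x)=g(ax)=f(bax)=f_{ba}(x)$. Thus every defining map of $\tau_g$ is among the defining maps of $\tau_f$, and the identity is common to both families. Because both are initial topologies, $\tau_g\subseteq\tau_f$. Conversely, $f(x)=g(b^{-1}x)$, so $f=g_{b^{-1}}$. The same argument then gives $f_a=g_{b^{-1}a}$ and $\tau_f\subseteq\tau_g$. At the level of basic neighborhoods this reads
\[
U_g(\delta;A,\varepsilon)=U_f(\delta;bA,\varepsilon)\quad\text{and}\quad U_f(\delta;A,\varepsilon)=U_g(\delta;b^{-1}A,\varepsilon).
\]

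\emph{(iv)$\Rightarrow$(i).} This is trivial.

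None of these steps is a real obstacle. The substantive work is already contained in Lemma~\ref{lem:finite-coordinates} and Theorem~\ref{thm:rational-dual}, which are used in (ii)$\Rightarrow$(iii). The remaining point needing care is the exclusion of $b=0$, and this uses only that $g\ne0$.
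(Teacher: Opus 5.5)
Your proposal is correct and follows the paper's proof essentially step for step. It uses the same cycle of implications, with Theorem~\ref{thm:rational-dual} carrying (ii)$\Rightarrow$(iii) and the identity $g_a=f_{ba}$ giving (iii)$\Rightarrow$(iv); your two-sided neighborhood identities simply spell out the paper's remark that $ba$ ranges over $\mathbb R$ as $a$ does.
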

\begin{proof}
Condition (i) implies (ii), since $g$ is continuous for $\tau_g$. Theorem~\ref{thm:rational-dual} gives (ii) implies (iii); the parameter cannot be zero because $g\ne0$. If $g=f_b$ with $b\ne0$, then

\[
g_a=f_{ba}\qquad(a\in\mathbb R).
\]
As $a$ ranges over $\mathbb R$, so does $ba$. The two defining families of maps, each including the identity, are therefore the same. This proves (iv), which implies (i).

\end{proof}
\begin{corollary}[Incomparability]\label{cor:incomparability}
Two distinct topologies in the family $\{\tau_f:0\ne f\in D\}$ are incomparable by inclusion.
\end{corollary}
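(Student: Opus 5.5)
This is an immediate consequence of Theorem~\ref{thm:comparison}, and the plan is simply to read off the equivalence (i)$\Leftrightarrow$(iv) there.

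Let $\tau_f$ and $\tau_g$ be two members of the family, with $f,g\in D$ nonzero, and assume $\tau_f\ne\tau_g$. To show they are incomparable, we rule out each inclusion separately.

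Suppose first that $\tau_g\subseteq\tau_f$. This is condition (i) of Theorem~\ref{thm:comparison}, so condition (iv) holds, namely $\tau_g=\tau_f$, which contradicts distinctness. Now suppose instead that $\tau_f\subseteq\tau_g$. The hypotheses of Theorem~\ref{thm:comparison} are symmetric in $f$ and $g$, since both are arbitrary nonzero members of $D$. Applying the theorem with the roles of $f$ and $g$ swapped again gives $\tau_f=\tau_g$, the same contradiction. Hence neither topology contains the other.

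The only point requiring care is the symmetric application of Theorem~\ref{thm:comparison}. This is legitimate because the theorem is stated for an arbitrary ordered pair of nonzero functionals. There is no real obstacle, since the substantive work already lies in Theorem~\ref{thm:rational-dual} and Theorem~\ref{thm:comparison}.
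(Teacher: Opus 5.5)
Your proposal is correct and matches the paper's one-line proof: by (i)$\Rightarrow$(iv) of Theorem~\ref{thm:comparison}, either inclusion forces $\tau_f=\tau_g$, which contradicts distinctness. Your explicit remark that the theorem can be applied with $f$ and $g$ swapped is exactly what the paper's phrase ``either inclusion'' leaves implicit.
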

\begin{proof}
Either inclusion would imply equality by Theorem~\ref{thm:comparison}.

\end{proof}
In terms of the chosen Hamel bases, equality holds precisely when their selected coefficient maps satisfy $g(x)=f(bx)$ for some $b\ne0$. This criterion concerns the coefficient maps, rather than equality of the bases themselves. For example, replacing an unselected basis element $h_1$ by $2h_1$ leaves the selected coefficient map unchanged. Also, replacing the entire basis $H$ by $b^{-1}H$ and selecting $b^{-1}h_0$ produces the coefficient map $x\mapsto f(bx)$ and hence the same topology.

\section{Number of topologies and countability}
Write $\mathfrak{c}=|\mathbb R|$ for the cardinality of the continuum. Distinct topologies below mean distinct collections of open subsets of the same underlying real line; no classification up to arbitrary homeomorphism is asserted.

\begin{theorem}[Multiplicity]\label{thm:multiplicity}
The Hamel coefficient family contains exactly $2^{\mathfrak{c}}$ distinct topologies. They are pairwise incomparable when distinct.
\end{theorem}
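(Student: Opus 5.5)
By Theorem~\ref{thm:comparison}, the map $f\mapsto\tau_f$ on $D\setminus\{0\}$ has fibres exactly the orbits $\{f_b:b\ne0\}$ of the multiplicative group $\mathbb R^\times$ acting by $f\mapsto f_b$. Each orbit has cardinality at most $\mathfrak c$, and by the uniqueness part of Theorem~\ref{thm:rational-dual} it has exactly $\mathfrak c$ elements. So the number of distinct topologies equals the number of orbits. Pairwise incomparability is Corollary~\ref{cor:incomparability}. It therefore remains to show $|D|=2^{\mathfrak c}$ and to deduce the orbit count.

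For the upper bound, $D\subset\mathbb Q^{\mathbb R}$, so $|D|\le\aleph_0^{\mathfrak c}=2^{\mathfrak c}$. The topologies are subsets of the power set of $\mathbb R$, but the bound through $D$ is already enough.

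For the lower bound, I will fix a Hamel basis $H$ and show $|H|=\mathfrak c$. Indeed, $\mathbb R$ is the union over finite $F\subset H$ of the countable spans $\mathbb Q F$. If $H$ were infinite of cardinality $\kappa$, then $|\mathbb R|=\kappa$. It is infinite because $\mathbb R$ is uncountable. A rational-linear map is determined freely by its values on $H$, so $D\cong\mathbb Q^{H}$ and $|D|=\aleph_0^{\mathfrak c}\ge 2^{\mathfrak c}$. Hence $|D\setminus\{0\}|=2^{\mathfrak c}$.

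For the orbit count, $D\setminus\{0\}$ is a union of orbits, each of size $\mathfrak c$. If there were $\lambda$ orbits, then $2^{\mathfrak c}=\lambda\cdot\mathfrak c=\max(\lambda,\mathfrak c)$. Since $2^{\mathfrak c}>\mathfrak c$ by Cantor's theorem, this forces $\lambda=2^{\mathfrak c}$.

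The main obstacle is only bookkeeping. The key points are that the fibres are precisely orbits of size at most $\mathfrak c$, which needs both directions of Theorem~\ref{thm:comparison}, and the cardinal arithmetic $|H|=\mathfrak c$. Both rely on the axiom of choice, which is already assumed.
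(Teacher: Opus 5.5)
Your proposal is correct and follows essentially the same route as the paper: you compute $|D|=2^{\mathfrak c}$ from a Hamel basis of cardinality $\mathfrak c$, you use Theorem~\ref{thm:comparison} and the uniqueness in Theorem~\ref{thm:rational-dual} to identify the fibres of $f\mapsto\tau_f$ with orbits of size $\mathfrak c$, and you conclude from $2^{\mathfrak c}=\max(\lambda,\mathfrak c)$ and Cantor's theorem, with incomparability taken from Corollary~\ref{cor:incomparability}. The only differences are cosmetic: you take the upper bound through $D\subset\mathbb Q^{\mathbb R}$, and, as you note yourself, the count needs only that each orbit has size at most $\mathfrak c$.
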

\begin{proof}
The rational dimension of $\mathbb R$ is $\mathfrak{c}$. Indeed, a vector space with an infinite basis over the countable field $\mathbb Q$ has cardinality equal to the maximum of the basis cardinality and the countable cardinal; the uncountability of $\mathbb R$ therefore forces its basis cardinality to be $\mathfrak{c}$.

A map in $D$ is determined by arbitrary rational values on a Hamel basis. Hence

\[
|D|=|\mathbb Q|^{\mathfrak{c}}=2^{\mathfrak{c}}.
\]
For completeness, the lower bound follows by using only the values zero and one. The upper bound follows from $|\mathbb Q|\le2^{\aleph_0}$ and the identity $\aleph_0\mathfrak{c}=\mathfrak{c}$.

By Theorem~\ref{thm:comparison}, the nonzero functions generating a fixed topology are exactly the orbit $\{f_b:b\ne0\}$, which has cardinality $\mathfrak{c}$ by uniqueness in Theorem~\ref{thm:rational-dual}. Partitioning a set of cardinality $2^{\mathfrak{c}}$ into classes of cardinality $\mathfrak{c}$ gives exactly $2^{\mathfrak{c}}$ classes. Explicitly, if the number of classes is $\kappa$, then

\[
2^{\mathfrak{c}}=\kappa\mathfrak{c}=\max\{\kappa,\mathfrak{c}\},
\]
so Cantor's strict inequality $\mathfrak{c}<2^{\mathfrak{c}}$ forces $\kappa=2^{\mathfrak{c}}$. Incomparability follows from Corollary~\ref{cor:incomparability}.

\end{proof}
\begin{proposition}[Failure of first countability]\label{prop:first-countability}
No topology $\tau_f$ is first countable, and consequently none is metrizable.
\end{proposition}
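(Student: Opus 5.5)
Since $\tau_f$ is a group topology, first countability is equivalent to the existence of a countable neighborhood base at zero. I will assume such a base $\{W_n\}$ exists and derive a contradiction. Each $W_n$ contains a basic neighborhood $U_f(\delta_n;A_n,\varepsilon_n)$, so the sets $U_f(\delta_n;A_n,\varepsilon_n)$ also form a base at zero. Put $A=\bigcup_n A_n$. This set is countable, so its rational span $Q_A=\operatorname{span}_{\mathbb Q}A$ is countable as well.

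The key step is to find a single scale $b$ that is not controlled by any $A_n$. I choose $b\in\mathbb R\setminus Q_A$; this is possible because $\mathbb R$ is uncountable. Because $\{U_f(\delta_n;A_n,\varepsilon_n)\}$ is a base, the $\tau_f$-neighborhood $U_f(1;\{b\},1)$ must contain some $U_f(\delta_n;A_n,\varepsilon_n)$. Then the implication
\[
|x|<\delta_n,\ \max_{a\in A_n}|f(ax)|<\varepsilon_n\ \Longrightarrow\ |f(bx)|<1
\]
shows that $f_b$ is continuous at zero for the initial topology of the finite family consisting of the identity and $f_a$ for $a\in A_n$. Since $f_b$ is additive, it is then continuous for that initial topology.

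Lemma~\ref{lem:finite-coordinates} now gives $f_b=c_0x+\sum_{a\in A_n}c_af_a$ with real coefficients. From here I repeat the argument of Theorem~\ref{thm:rational-dual} restricted to this finite family. The term $c_0$ vanishes by the infinite-dimensional kernel argument. The remaining coefficients become rational by the surjectivity argument. Rational linearity then gives $f_b=f_{b'}$ with $b'=\sum_{a\in A_n}q_aa\in Q_A$. The uniqueness part of Theorem~\ref{thm:rational-dual} forces $b=b'\in Q_A$, which contradicts the choice of $b$. So no countable base exists, and $\tau_f$ is not first countable. Every metrizable space is first countable, so $\tau_f$ is not metrizable either.

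The main obstacle is to check that the proof of Theorem~\ref{thm:rational-dual} really does keep $b$ inside the rational span of the finitely many $a$'s actually used, rather than only in some span. Its computation $g=\sum q_jf_{a_j}=f_{\sum q_ja_j}$ confirms this, provided I apply Lemma~\ref{lem:finite-coordinates} to the finite family from $A_n$ and not to all of $\{f_a\}$. I will state that reduction explicitly.
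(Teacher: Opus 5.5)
Your proof is correct and follows essentially the paper's route: you reduce to the countable scalar set $A=\bigcup_n A_n$, and you use Lemma~\ref{lem:finite-coordinates} together with the argument of Theorem~\ref{thm:rational-dual} to force every $\tau_f$-continuous $f_b$ into the countable family $\{f_{b'}:b'\in\operatorname{span}_{\mathbb Q}A\}$, which contradicts uniqueness of the parameter. The differences are cosmetic: you pick an explicit $b\notin\operatorname{span}_{\mathbb Q}A$ and apply the lemma to a single finite family from one $A_n$, so you never need the paper's check that the countable family already generates $\tau_f$, whereas the paper concludes by comparing cardinalities.
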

\begin{proof}
Suppose a countable neighborhood base exists. For each member choose a basic neighborhood contained in it. Let $A_0$ be the countable union of the finite scalar sets used in these choices. The topology generated by the identity and $\{f_a:a\in A_0\}$ is then $\tau_f$: it is coarser by definition, while the chosen neighborhoods ensure continuity of the identity in the reverse direction at zero and hence everywhere.

Apply Lemma~\ref{lem:finite-coordinates} to this countable generating family. The proof of Theorem~\ref{thm:rational-dual} shows that every continuous rational-valued additive function lies in the rational span of $\{f_a:a\in A_0\}$. That span is countable. But all $f_b$, for $b\in\mathbb R$, are continuous and are pairwise distinct by Theorem~\ref{thm:rational-dual}. This is a contradiction. Every metrizable topology is first countable.

\end{proof}
\section{A common nondiscrete refinement}
Let $\tau_D$ be the topology generated by the identity into $\mathbb R_u$ and all members of $D$ into $\mathbb R_u$.

\begin{theorem}[Greatest member of the generated family]\label{thm:common-refinement}
The topology $\tau_D$ is a Hausdorff nondiscrete TVG topology and
\[
\tau_D=\bigvee_{0\ne f\in D}\tau_f.
\]
It is strictly finer than each individual $\tau_f$. Thus no individual $\tau_f$ is maximal among nondiscrete TVG topologies.

\end{theorem}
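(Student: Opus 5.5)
The plan has four steps: identify $\tau_D$ as the supremum, check the TVG properties, prove nondiscreteness, and then prove strictness. Only nondiscreteness needs real work.

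\emph{Supremum and TVG structure.} Each $\tau_f$ is the initial topology of the identity together with the maps $f_a$, and each $f_a$ lies in $D$. Conversely, every $g\in D\setminus\{0\}$ equals $g_1$ and is therefore $\tau_g$-continuous. So the identity together with all of $D$ and the identity together with $\bigcup_f\{f_a\}$ generate the same initial topology. Since the supremum of initial topologies is the initial topology of the union of the generating families, this gives $\tau_D=\bigvee\tau_f$. All generating maps are additive, so $\tau_D$ is a group topology, exactly as in Theorem~\ref{thm:construction}. For a fixed $b\ne0$, the inverse image under multiplication by $b$ of a basic set $\{|x|<\delta,\ |g_i(x)|<\varepsilon\}$ is $\{|x|<\delta/|b|,\ |(g_i)_b(x)|<\varepsilon\}$, and each $(g_i)_b$ lies in $D$. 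Hence every nonzero scalar acts continuously, and the zero scalar gives a constant map. Hausdorffness is inherited from $\tau_u\subseteq\tau_D$.

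\emph{Nondiscreteness (main step).} A basic neighborhood of zero has the form $\{x:|x|<\delta,\ |g_i(x)|<\varepsilon,\ i=1,\dots,m\}$ with $g_i\in D$. I will show it contains a nonzero point. Let $K=\bigcap_i\ker g_i$. This is a $\mathbb Q$-subspace of finite codimension in $\mathbb R$, so it has rational dimension $\mathfrak c$. Then $K$ is not contained in any one-dimensional space $c\mathbb Q$, hence $K\ne\{0\}$. A nonzero rational subspace is dense in $\mathbb R_u$: if $0\ne k\in K$, then $\mathbb Qk\subseteq K$ is dense. So $K$ contains points with $0<|x|<\delta$, and all $g_i$ vanish there. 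The only care needed is to use the finite codimension of $K$ rather than an approximation argument like the $1/n$ sequence from Theorem~\ref{thm:construction}. That sequence fails here, because $D$ contains maps with $g(1)\ne0$, for which $g(1/n)=g(1)/n$ does not stay below $\varepsilon$ uniformly as the neighborhood shrinks across different $g$. Still, the kernel argument handles each finite intersection, which is all nondiscreteness requires.

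\emph{Strictness.} Fix nonzero $f$ and choose $g\in D\setminus\{0\}$ that is not of the form $f_b$. Such a $g$ exists because the orbit $\{f_b\}$ has cardinality $\mathfrak c<2^{\mathfrak c}=|D|$, as in Theorem~\ref{thm:multiplicity}. Now $g$ is $\tau_D$-continuous. If it were $\tau_f$-continuous, Theorem~\ref{thm:rational-dual} would force $g=f_b$, a contradiction. So $\tau_f\subsetneq\tau_D$. Since $\tau_D$ is a nondiscrete TVG topology strictly finer than $\tau_f$, no $\tau_f$ is maximal among nondiscrete TVG topologies.
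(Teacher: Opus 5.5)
Your proof is correct. Apart from nondiscreteness, it follows the paper's proof:
\begin{itemize}
\item the same identification of $\tau_D$ as the supremum (each $f_a\in D$, and each nonzero $g\in D$ is $\tau_g$-continuous);
\item the same dilation computation;
\item the same strictness argument, using $|\{f_b\}|\le\mathfrak c<2^{\mathfrak c}=|D|$ together with Theorem~\ref{thm:rational-dual}.
\end{itemize}

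The difference is in nondiscreteness, where your remark that the sequence $\frac1n$ ``fails'' is mistaken. The paper simply reuses that sequence. A fixed basic neighborhood involves only finitely many $g_j$, and each satisfies $g_j(\frac1n)=\frac{g_j(1)}{n}\to0$, so a tail of $(\frac1n)$ lies in that neighborhood. Hence $\frac1n\to0$ in $\tau_D$. Convergence allows the threshold $N$ to depend on the neighborhood, so no uniformity over all of $D$ is needed. What is true is that no single $\frac1n$ lies in every neighborhood, but nondiscreteness does not require that. The same one-line argument shows that the initial topology of any family of additive maps into $\mathbb R_u$ is nondiscrete.

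Your kernel argument is nonetheless valid. $\bigcap_i\ker g_i$ has finite $\mathbb Q$-codimension, so it is a nonzero, hence dense, subgroup and therefore meets $(-\delta,\delta)\setminus\{0\}$. It costs more: it uses that the $g_i$ are rational-valued and that $\dim_{\mathbb Q}\mathbb R$ is infinite. For general real-valued additive maps it would break down, since their kernels can be trivial. In exchange it gives a slightly stronger conclusion: every basic neighborhood contains nonzero points at which all the finitely many coefficient maps vanish exactly. Keep the kernel argument if you like, but drop or correct the claim about the sequence.
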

\begin{proof}
Finite intersections of inverse images under additive maps give a group-topology base. For every $g\in D$ and real $a$, the function $x\mapsto g(ax)$ again belongs to $D$. This proves continuity of fixed dilations. The identity coordinate makes the topology finer than usual and Hausdorff. Every basic neighborhood involves finitely many functions $g_j$, and

\[
g_j(\frac{1}{n})=\frac{g_j(1)}{n}\longrightarrow0.
\]
Hence $\frac{1}{n}\to0$ in $\tau_D$, proving nondiscreteness.

Every $\tau_f$ is contained in $\tau_D$. Conversely, each nonzero $g\in D$ is itself a Hamel coefficient functional and is continuous in $\tau_g$. Therefore the supremum contains every defining map of $\tau_D$, giving equality.

Fix $f\ne0$. Only $\mathfrak{c}$ members of $D$ are $\tau_f$-continuous, by Theorem~\ref{thm:rational-dual}, while $|D|=2^{\mathfrak{c}}$. Choose $g\in D$ outside that orbit. It is continuous for $\tau_D$ but not for $\tau_f$, so $\tau_f\subsetneq\tau_D$.

\end{proof}
More generally, for any family $\mathcal F\subseteq D$, the topology generated by the identity and all maps $x\mapsto f(ax)$, with $f\in\mathcal F$ and $a\in\mathbb R$, is a nondiscrete Hausdorff TVG topology. Thus $\tau_D$ is greatest in this explicitly defined class of topologies. This assertion does not identify a greatest nondiscrete TVG topology in the unrestricted class.

\section{Maximal extensions}
A topology is maximal nondiscrete if every strictly finer TVG topology is discrete. A greatest nondiscrete topology would contain every nondiscrete TVG topology. The latter is a stronger requirement.

\begin{proposition}[Existence of maximal extensions]\label{prop:maximal-extensions}
Every nondiscrete TVG topology on $\mathbb R$ has a maximal nondiscrete TVG extension. In particular, $\tau_D$ has such an extension, which is Hausdorff and strictly finer than every $\tau_f$.
\end{proposition}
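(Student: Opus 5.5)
The plan is to apply Zorn's lemma to the family $\mathcal P$ of all nondiscrete TVG topologies on $\mathbb R$ that contain a fixed nondiscrete TVG topology $\sigma$, ordered by inclusion. $\mathcal P$ is nonempty because $\sigma\in\mathcal P$. A maximal element $\mu$ of $\mathcal P$ is then a maximal nondiscrete TVG extension of $\sigma$. Indeed, any strictly finer TVG topology also contains $\sigma$, so it cannot be nondiscrete without contradicting maximality; hence it is discrete.

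The main step is to show that every nonempty chain $\mathcal C\subseteq\mathcal P$ has an upper bound in $\mathcal P$. I will work with neighborhood filters at zero rather than with unions of topologies, since a union of topologies need not be a topology. For $\rho\in\mathcal C$ let $\mathcal N_\rho$ be its filter of zero-neighborhoods, and put $\mathcal N=\bigcup_{\rho\in\mathcal C}\mathcal N_\rho$. Because $\mathcal C$ is a chain, any finitely many members of $\mathcal N$ lie in a single $\mathcal N_\rho$, so $\mathcal N$ is a filter. It also inherits the standard group-topology axioms from that single $\mathcal N_\rho$:
\begin{itemize}
\item for $U\in\mathcal N$ there is $V\in\mathcal N$ with $V+V\subseteq U$;
\item $-U\in\mathcal N$;
\item for $y\in U$ there is $W\in\mathcal N$ with $y+W\subseteq U$.
\end{itemize}
Hence $\mathcal N$ defines a group topology $\rho^*$ containing every member of $\mathcal C$. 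Continuity of each fixed dilation $x\mapsto bx$ with $b\ne0$ means $b^{-1}U\in\mathcal N$ whenever $U\in\mathcal N$, and this again holds inside the single $\rho$ with $U\in\mathcal N_\rho$. The scalar $b=0$ gives a constant map. So $\rho^*$ is a TVG topology.

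Nondiscreteness of $\rho^*$ is the point I expect to need care, and it is where the chain property is essential; for an arbitrary family of topologies, as the remark after Theorem~\ref{thm:common-refinement} notes, nothing of the sort is guaranteed. Suppose $\{0\}$ is $\rho^*$-open. Then $\{0\}\in\mathcal N$, so $\{0\}\in\mathcal N_\rho$ for some $\rho\in\mathcal C$. By translation invariance $\rho$ is then discrete, a contradiction. Thus $\rho^*\in\mathcal P$ is an upper bound for $\mathcal C$, and Zorn's lemma yields a maximal element $\mu\supseteq\sigma$.

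For the second sentence of the statement, take $\sigma=\tau_D$, which is a nondiscrete TVG topology by Theorem~\ref{thm:common-refinement}. The resulting $\mu$ contains $\tau_D$, and $\tau_D$ is finer than $\tau_u$, so $\mu$ is Hausdorff. By Theorem~\ref{thm:common-refinement}, $\tau_D$ is strictly finer than every $\tau_f$. Therefore $\tau_f\subsetneq\tau_D\subseteq\mu$, so $\mu$ is strictly finer than every $\tau_f$.
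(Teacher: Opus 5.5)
Your proof is correct and is essentially the paper's argument. Both use Zorn's lemma on the nondiscrete TVG topologies containing the given one, take the supremum of a chain as upper bound (you describe it through the union of the zero-neighborhood filters instead of ``the topology generated by the union,'' which makes continuity of addition and dilations explicit), get nondiscreteness because $\{0\}$ would already be a neighborhood in a single member of the chain, and obtain the final clause from $\tau_u\subseteq\tau_D$ and Theorem~\ref{thm:common-refinement}.

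There is one slip: your third bullet is false for arbitrary members of a neighborhood filter. If $\tau_u\subseteq\rho$, take $U=\{x:|x|\le1\}$ and $y=1$; every $W\in\mathcal N$ contains a positive number, so $1+W\not\subseteq U$. Either state that bullet only for $\rho$-open zero-neighborhoods, or drop it: a filter of sets containing $0$ that satisfies your first two bullets is already the zero-neighborhood filter of a unique group topology.
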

\begin{proof}
Fix a nondiscrete TVG topology $\rho$. Order all nondiscrete TVG topologies containing $\rho$ by inclusion. For a nonempty chain, take the topology generated by its union. This is a TVG topology: group operations and fixed dilations are continuous for each generating topology and hence for the generated topology.

If the generated topology were discrete, finitely many zero-neighborhoods taken from members of the chain would have intersection $\{0\}$. One member of the chain contains all these finitely many open sets, and would therefore already be discrete. This contradiction proves nondiscreteness of the upper bound. The empty chain has $\rho$ as an upper bound. Zorn's lemma gives a maximal element. A strictly finer nondiscrete TVG topology would remain in the same ordered set, so the element is maximal in the asserted sense.

For $\rho=\tau_D$, every extension is Hausdorff because it contains the usual topology, and strictness above each $\tau_f$ follows from Theorem~\ref{thm:common-refinement}.

\end{proof}
The argument proves existence of maximal extensions, not uniqueness or existence of a greatest nondiscrete TVG topology. The unrestricted greatest-topology question is not resolved in this article.

\section{Literature and scope}
Our earlier papers \cite{Yaroshevskaya1975,Yaroshevskaya1981} provide the real-line topology $t$ and its minimality theorem; the English translation of the 1975 paper is also listed in \cite{Yaroshevskaya1975}. The intermediate constructions of \cite{Yaroshevskaya2026} lie strictly below the usual topology. The Hamel coefficient topologies considered here lie strictly above it, so the present construction complements that earlier direction of investigation.

Hamel's construction of discontinuous additive functions is classical \cite{Hamel1905}. Dom\'inguez and Tarieladze \cite[Example~4.3]{DominguezTarieladze2008} explicitly use such functions on the discrete real line in their discussion of character lifting. Dergachev \cite{Dergachev1978} discusses the TVG framework and a real-line example based on almost-periodic functions. Neither cited example is the nondiscrete coefficient topology defined in Section~\ref{sec:construction}.

Hejcman's work \cite{Hejcman1988} concerns TVG topologies on the real line. Our purpose here is to give a self-contained construction and an exact comparison theorem for rational-valued coefficient functionals; we make no historical priority claim relative to that work.

The classification in Theorem~\ref{thm:comparison} uses rational-valued coefficient maps. For arbitrary real-valued additive maps, Lemma~\ref{lem:finite-coordinates} still applies, but its reduction to a single rescaling need not follow. The multiplicity theorem counts topologies on the fixed real line, and does not count homeomorphism types.

\section*{Acknowledgement}
The author acknowledges the assistance of OpenAI's Codex in mathematical discussion, proof checking, exposition, and document preparation. The author assumes responsibility for the final content and any remaining errors.

\end{document}